\newtheorem{thm}{Theorem}[section]
\newtheorem{lem}[thm]{Lemma}
\newtheorem{prop}[thm]{Proposition}
\theoremstyle{definition}
\newtheorem{defn}[thm]{Definition}
\theoremstyle{remark}
\numberwithin{equation}{section}
\newcommand{\norm}[1]{\left\Vert#1\right\Vert}
\newcommand{\abs}[1]{\left\vert#1\right\vert}
\newcommand{\set}[1]{\left\{#1\right\}}
\newcommand{\To}{\longrightarrow}
\begin{document}

\setcounter{tocdepth}{1}


\title[Projectivity of the free Banach lattice generated by a lattice]{Projectivity of the free Banach lattice generated by a lattice}
\author[A.\ Avil\'es]{Antonio Avil\'es}
\address{Universidad de Murcia, Departamento de Matem\'{a}ticas, Campus de Espinardo 30100 Murcia, Spain.}
\email{avileslo@um.es}

\author[J.D. Rodr\'iguez Abell\'an]{Jos\'e David Rodr\'iguez Abell\'an}
\address{Universidad de Murcia, Departamento de Matem\'{a}ticas, Campus de Espinardo 30100 Murcia, Spain.}
\email{josedavid.rodriguez@um.es}

\thanks{Authors supported by project MTM2017-86182-P (Government of Spain, AEI/FEDER, EU) and project 20797/PI/18 by Fundaci\'{o}n S\'{e}neca, ACyT Regi\'{o}n de Murcia. Second author supported by FPI contract of Fundaci\'on S\'eneca, ACyT Regi\'{o}n de Murcia.}

\keywords{Free Banach lattice; lattice; linear order; projectivity}

\subjclass[2010]{46B43, 06BXX}

\begin{abstract}
We study the projectivity of the free Banach lattice generated by a lattice $\mathbb{L}$ in two cases: when the lattice is finite, and when the lattice is an infinite linearly ordered set. We prove that in the first case it is projective while in the second case \textcolor{red}{\textst{it is not.}}\textcolor{blue}{, if the linear order contains either an increasing sequence without upper bounds or a decreasing sequence without lower bounds, then it is not projective.}
\end{abstract}

\maketitle

\setlength{\parskip}{4mm}

\textcolor{blue}{\textit{We found a mistake in this paper. At some point in the proof of the second part of Lemma 4.2 we considered functions $\varphi_i$ that had to be continuous but they were not. So Lemma 4.2 and Theorem 4.4 must be stated in a weaker form. We indicate how to fix the error: Red text indicates what should be removed and blue text what should be added to the published version in Archiv der Mathematik 113 (2019), 515--524}.}

\section{Introduction}

Free and projective Banach lattices were introduced in \cite{dPW15}. The free Banach lattice $FBL(A)$ generated by a set $A$ is a Banach lattice characterized by the property that every bounded map $T:A\To X$ into a Banach lattice $X$ extends to a unique Banach lattice homomorphism $\hat{T}:FBL(A)\To X$ with the same norm. This idea was generalized in \cite{ART18} and \cite{ARA18}, where the free Banach lattice generated by a Banach space $E$ and by a lattice $\mathbb{L}$ are respectively studied. By a lattice we mean here a set $\mathbb{L}$ together with two operations $\wedge$ and $\vee$ that are the infimum and supremum of some partial order relation on $\mathbb{L}$, and a lattice homomorphism is a function between lattices that commutes with those two operations.   

\begin{defn} \label{FBLGBL} Given a lattice $\mathbb{L}$, the \textit{free Banach lattice generated by $\mathbb{L}$} is a Banach lattice $F$ together with a lattice homomorphism $\phi: \mathbb{L} \longrightarrow F$ such that for every Banach lattice $X$ and every bounded lattice homomorphism $T: \mathbb{L} \longrightarrow X$, there exists a unique Banach lattice homomorphism $\hat{T}: F \longrightarrow X$ such that $|| \hat{T} || = || T ||$ and $T = \hat{T} \circ \phi$.	
\end{defn}

Here, the norm of $T$ is $\norm{T} := \sup \set{\norm{T(x)}_X : x \in \mathbb{L}}$, while the norm of $\hat{T}$ is the usual norm of an operator acting between Banach spaces. This definition determines a Banach lattice that we denote by $FBL\langle \mathbb{L}\rangle$ in an essentially unique way. When $\mathbb{L}$ is a distributive lattice the function $\phi$ is injective and we can view $FBL\langle \mathbb{L} \rangle$ as a Banach lattice which contains a subset lattice-isomorphic to $\mathbb{L}$ in a way that its elements work as free generators modulo the lattice relations on $\mathbb{L}$, cf. \cite{ARA18}. To see that, it is well known that a lattice $\mathbb{L}$ is distributive if, and only if, $\mathbb{L}$ is lattice-isomorphic to a bounded subset of a Banach lattice. Thus, it is clear that if $\phi$ is inyective then $\mathbb{L}$ is distributive. On the other hand, if $\mathbb{L}$ is distributive, we have a bounded injective lattice homomorphism $T : \mathbb{L} \longrightarrow X$ for some Banach lattice $X$. Using the definition of being the free Banach lattice generated by the lattice $\mathbb{L}$, there is $\hat{T}$ such that $\hat{T} \circ \phi = T$. Since $T$ is inyective, $\phi$ is also inyective.

The notions of free and projective objects are closely related in the general theory of categories. In the context of Banach lattices, de Pagter and Wickstead \cite{dPW15} introduced projectivity in the following form:

\begin{defn}\label{projdef} A Banach lattice $P$ is \textit{projective} if whenever $X$ is a Banach lattice, $J$ a closed ideal in $X$ and $Q : X \longrightarrow X/J$ the quotient map, then for every Banach lattice homomorphism $T : P \longrightarrow X/J$ and $\varepsilon > 0$, there is a Banach lattice homomorphism $\hat{T} : P \longrightarrow X$ such that $T = Q \circ \hat{T}$ and $\|\hat{T}\| \leq (1 + \varepsilon)\norm{T}$.
\end{defn}

Some examples of projective Banach lattices given in \cite{dPW15} include $FBL(A)$, $\ell_1$, all finite dimensional Banach lattices and Banach lattices of the form $C(K)$, where $K$ is a compact neighborhood retract of $\mathbb{R}^n$. But we are still far from understanding what the projective Banach lattices are. Such basic questions as whether $c_0$, $\ell_2$ or $C([0,1]^\mathbb{N})$ are projective were left open in \cite{dPW15}.

Since the canonical projective Banach lattice is the free Banach lattice $FBL(A)$, it is natural to think that its variants $FBL[E]$ (the free Banach lattice generated by the Banach space $E$) and $FBL\langle \mathbb{L}\rangle$ may also be projective at least in some cases. In this paper we focus on the case of $FBL\langle \mathbb{L}\rangle$. We prove that $FBL\langle \mathbb{L}\rangle$ is projective whenever $\mathbb{L}$ is a finite lattice, while it is not projective when $\mathbb{L}$ is an infinite linearly ordered set \textcolor{blue}{containining either an increasing sequence without upper bounds or a decreasing sequence without lower bounds}.

If $\mathbb{L}$ is a finite lattice, $FBL\langle \mathbb{L}\rangle$ is a  renorming of a Banach lattice of continuous functions $C(K)$ on a compact neighborhood retract $K$ of $\mathbb{R}^n$, which is projective \cite{dPW15}. Projectivity, however, is not preserved under renorming, because of the $(1+\varepsilon)$ bound required in Definition~\ref{projdef}. Getting this bound will be the key point in the proof.

In the infinite case, we considered only linearly ordered sets, as they are easier to handle than general lattices. We do not know if there is some infinite lattice $\mathbb{L}$ such that $FBL\langle \mathbb{L}\rangle$ is projective.

\section{Preliminaries}

\subsection{Absolute neighborhood retracts.}

An absolute neighborhood retract (ANR)  is a topological space $X$ with the property that whenever $X$ is a subspace of $Y$, then there is an open subset $V$ of $Y$ such that $X\subset V \subset Y$ and $X$ is a retract of $V$, meaning that there is a continuous function $r:V\To X$ such that $r(x)=x$  for all $x\in X$.

The following are two basic facts of the theory that can be found in \cite{vMill} as Theorems 1.5.1 and 1.5.9:

\begin{itemize}
	\item Every closed convex subset of $\mathbb{R}^n$ is ANR.
	\item If $X_1$, $X_2$ are closed subsets of $X$, and $X_1$, $X_2$ and $X_1\cap X_2$ are ANR, then $X_1\cup X_2$ is also ANR.
\end{itemize}

From the two facts above, one can easily prove that every finite union of closed convex subsets of $\mathbb{R}^n$ is ANR, by induction on the number of convex sets in that union.

\subsection{Free Banach lattices.}


We collect the necessary facts and definitions about free Banach lattices from \cite{dPW15,ART18,ARA18}. 

An explicit construction of the free Banach lattice $FBL(A)$ generated by a set $A$ is as follows. For $x \in A$, let $\delta_x:[-1,1]^A \longrightarrow [-1,1]$ be the evaluation function given by $\delta_x(x^*) = x^*(x)$ for every $x^* \in [-1,1]^A$, and for $f:[-1,1]^A \longrightarrow \mathbb{R}$ we define $$\|f\| = \sup \set{\sum_{i = 1}^n \abs{ r_if(x_{i}^{\ast})} :  \text{}r_i \in \mathbb{R}, \text{ } x_i^{\ast} \in [-1,1]^A, \text{ }\sup_{x \in A} \sum_{i=1}^n \abs{r_ix_i^{\ast}(x)} \leq 1 },$$ which we will denote by $\| f \|$ or $\| f \|_{FBL(A)}$. The Banach lattice $FBL(A)$ is the Banach lattice generated by the evaluation functions $\delta_x$ inside the Banach lattice of all functions $f:[-1,1]^A\To\mathbb{R}$ with finite norm. The natural identification of $A$ inside $FBL(A)$ is given by the map $u: A \longrightarrow FBL(A)$ where $u(x) = \delta_x$. Since every function in $FBL(A)$ is an uniform limit of such functions, they are all continuous and positively homogeneous (they commute with multiplication by positive scalars). When $A$ is finite, then $FBL(A)$ consists of \emph{all} continuous and positively homogeneous functions on $[-1,1]^A$, or equivalently in this case, all positively homogeneous functions on $[-1,1]^A$ that are continuous on the boundary $\partial [-1,1]^A$. Thus, when $A$ is finite, $FBL(A)$ is a renorming of the Banach lattice of continuous functions on $\partial [-1,1]^A$.

We can describe of $FBL \langle \mathbb{L} \rangle$ as the quotient of $FBL(\mathbb{L})$ (the free Banach lattice generated by the underlying set of the lattice $\mathbb{L}$) by the closed ideal $\mathcal{I}$ of $FBL(\mathbb{L})$ generated by the set $$\set{\delta_x \vee \delta_y - \delta_{x \vee y},\ \ \delta_x \wedge \delta_y - \delta_{x \wedge y}\ : \ x,y \in \mathbb{L}}.$$ In \cite{ARA18} we prove that, $FBL(\mathbb{L})/\mathcal{I}$, together with the map $\phi: \mathbb{L} \longrightarrow FBL(\mathbb{L})/\mathcal{I}$ given by $\phi(x) = \delta_x + \mathcal{I}$ is the free Banach lattice generated by the lattice $\mathbb{L}$. 

Also in \cite{ARA18} there is a different description of $FBL\langle \mathbb{L} \rangle$ as a space of functions. The construction is analogous to that of $FBL(A)$ but taking into account the lattice structure. Namely, if we see $[-1,1]$ as a lattice, define $$\mathbb{L}^{\ast} = \set {x^{\ast}: \mathbb{L} \longrightarrow [-1,1] : x^{\ast} \text{ is a lattice-homomorphism}}.$$ For every $x \in \mathbb{L}$ consider the evaluation function $\dot{\delta}_x : \mathbb{L}^{\ast} \longrightarrow [-1,1]$ given by $\dot{\delta}_x(x^{\ast}) = x^{\ast}(x)$, and for $f \in \mathbb{R}^{\mathbb{L}^{\ast}}$, define $$ \norm{f}_\ast = \sup \set{\sum_{i = 1}^n \abs{ r_if(x_{i}^{\ast})} :  \text{ }r_i \in \mathbb{R}, \text{ } x_i^{\ast} \in \mathbb{L}^{\ast}, \text{ }\sup_{x \in \mathbb{L}} \sum_{i=1}^n \abs{r_ix_i^{\ast}(x)} \leq 1 }.$$

Let $FBL_*\langle \mathbb{L} \rangle$ be the Banach lattice generated by the evaluations $\set{\dot{\delta}_x : x \in \mathbb{L}}$ inside the Banach lattice of all functions $f \in \mathbb{R}^{\mathbb{L}^{\ast}}$ with $\|f\|_\ast<\infty$, endowed with the norm $\|\cdot\|_\ast$ and the pointwise operations. This, together with the assignment $\phi(x)=\dot{\delta}_x$ is the free Banach lattice generated by $\mathbb{L}$.

Thus, we have two alternative constructions of the free Banach lattice generated by $\mathbb{L}$ that we are denoting as $FBL\langle \mathbb{L} \rangle = FBL(\mathbb{L})/\mathcal{I}$ and $FBL_*\langle \mathbb{L} \rangle$, respectively. There is a natural Banach lattice homomorphism $R:FBL(\mathbb{L})\To FBL_*\langle \mathbb{L} \rangle$ given by restriction $R(f) = f|_{\mathbb{L}^*}$. This is surjective and its kernel is the ideal $\mathcal{I}$, and thus $R$ induces the canonical isometric Banach lattice isomorphism between $FBL\langle \mathbb{L} \rangle$ and $FBL_*\langle \mathbb{L} \rangle$.

\subsection{Projective Banach lattices.} 

We state here a variation of \cite[Theorem 10.3]{dPW15}:

\begin{prop}\label{quotientofprojective}
	Let $P$ be a projective Banach lattice, $\mathcal{I}$ an ideal of $P$ and $\pi:P\To P/\mathcal{I}$ the quotient map. The quotient $P/\mathcal{I}$ is projective if and only if for every $\varepsilon>0$ there exists a Banach lattice homomorphism $u_\varepsilon:P/\mathcal{I}\To P$ such that $\pi\circ u_\varepsilon = id_{P/\mathcal{I}}$ and $\|u_\varepsilon\|\leq 1+\varepsilon$.
\end{prop}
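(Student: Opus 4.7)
The plan is to prove both directions by factoring lifts through either the identity or the projectivity of $P$, with the only delicate point being the $\varepsilon$-bookkeeping in the sufficiency direction.

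For the necessity direction ($\Rightarrow$), I would apply the definition of projectivity of $P/\mathcal{I}$ to the concrete datum $X = P$, $J = \mathcal{I}$, $Q = \pi$, and $T = \mathrm{id}_{P/\mathcal{I}}: P/\mathcal{I} \to P/\mathcal{I}$. Since $\|\mathrm{id}_{P/\mathcal{I}}\| = 1$, for any $\varepsilon > 0$ Definition \ref{projdef} yields a Banach lattice homomorphism $\hat{T}: P/\mathcal{I} \to P$ with $\pi \circ \hat{T} = \mathrm{id}_{P/\mathcal{I}}$ and $\|\hat{T}\| \leq 1 + \varepsilon$. Setting $u_\varepsilon := \hat{T}$ gives what we want. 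This direction is essentially a one-line application of the definition.

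For the sufficiency direction ($\Leftarrow$), fix a Banach lattice $X$, a closed ideal $J \subseteq X$, the quotient map $Q: X \to X/J$, a Banach lattice homomorphism $T: P/\mathcal{I} \to X/J$, and $\varepsilon > 0$. The key idea is to lift $T$ in two stages. First, compose with $\pi$ to obtain $T \circ \pi: P \to X/J$, which is a Banach lattice homomorphism of norm at most $\|T\|$ (with equality actually, since $\pi$ is a metric quotient). Using that $P$ is projective and a suitably chosen $\delta > 0$, we get a Banach lattice homomorphism $S: P \to X$ such that $Q \circ S = T \circ \pi$ and $\|S\| \leq (1+\delta)\|T \circ \pi\| \leq (1+\delta)\|T\|$. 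Second, use the hypothesis to choose, for a suitably small $\varepsilon' > 0$, a homomorphism $u_{\varepsilon'}: P/\mathcal{I} \to P$ with $\pi \circ u_{\varepsilon'} = \mathrm{id}_{P/\mathcal{I}}$ and $\|u_{\varepsilon'}\| \leq 1 + \varepsilon'$. Define $\hat{T} := S \circ u_{\varepsilon'}: P/\mathcal{I} \to X$. Then
\[
Q \circ \hat{T} = Q \circ S \circ u_{\varepsilon'} = T \circ \pi \circ u_{\varepsilon'} = T \circ \mathrm{id}_{P/\mathcal{I}} = T,
\]
so $\hat{T}$ is a genuine lift. For the norm, $\|\hat{T}\| \leq \|S\|\, \|u_{\varepsilon'}\| \leq (1+\delta)(1+\varepsilon')\|T\|$.

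The only real work is making sure the constants align, and this is routine: choose $\delta$ and $\varepsilon'$ with $(1+\delta)(1+\varepsilon') \leq 1+\varepsilon$ (for instance both equal to $\sqrt{1+\varepsilon}-1$). This yields $\|\hat{T}\| \leq (1+\varepsilon)\|T\|$, verifying Definition \ref{projdef} for $P/\mathcal{I}$. There is no serious obstacle here; the main point is conceptual, namely that projectivity of a quotient is equivalent to the existence of almost-isometric lattice-homomorphic sections of $\pi$, and both implications are just a matter of feeding the right objects into the projectivity diagram.
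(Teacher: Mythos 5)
Your proposal is correct and follows essentially the same route as the paper: the forward direction is a direct application of Definition~\ref{projdef} (the paper does not even bother to specify the data $X=P$, $J=\mathcal{I}$, $T=id_{P/\mathcal{I}}$ as you do), and the converse lifts $T\circ\pi$ through the projectivity of $P$ and composes with the near-isometric section $u_{\varepsilon'}$, with the same bookkeeping (the paper simply takes $\delta=\varepsilon'$ with $(1+\varepsilon')^2\leq 1+\varepsilon$). There is nothing to add.
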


\begin{proof}
	If $P/\mathcal{I}$ is projective, then we can just apply Definition~\ref{projdef}. On the other hand, if we have the above property and we want to check Definition~\ref{projdef}, take $\varepsilon_0>0$, a quotient map $Q:X\mapsto X/J$ and a Banach lattice homomorphism $T:P/\mathcal{I}\To X/J$. Take $\varepsilon$ with $(1+\varepsilon)^2\leq 1+\varepsilon_0$. Since $P$ is projective we can find $S:P\To X$ with $Q\circ S = T\circ\pi$ and $\|S\|\leq (1+\varepsilon)\|T\circ\pi\| = (1+
	\varepsilon)\|T\|$. If we take $\hat{T}= S\circ u_\varepsilon$, then $Q\circ \hat{T} = Q\circ S \circ u_\varepsilon = T\circ \pi\circ u_\varepsilon = T$ and $\|\hat{T}\| \leq (1+\varepsilon)^2\|T\| \leq (1+\varepsilon_0)\|T\|$ as desired. 
\end{proof}

Since $FBL(\mathbb{L})$ is projective \cite[Proposition 10.2]{dPW15}, and the restriction map described above $R:FBL(\mathbb{L})\To FBL_\ast\langle \mathbb{L}\rangle$ is a quotient map \cite{ARA18}, we get, as a particular instance of Proposition~\ref{quotientofprojective},

\begin{prop}\label{caractProyectividad}
	
	Let $\mathbb{L}$ be a lattice and let $R: FBL(\mathbb{L}) \longrightarrow FBL_*\langle \mathbb{L} \rangle$ be the restriction map $R(f) = f\vert_{\mathbb{L}^*}$.	The Banach lattice $FBL_*\langle \mathbb{L} \rangle$ is projective if, and only if, for every $\varepsilon > 0$ there exists a Banach lattice homomorphism $u_{\varepsilon}:  FBL_*\langle \mathbb{L} \rangle \longrightarrow FBL(\mathbb{L})$ such that $\| u_{\varepsilon} \| \leq 1+\varepsilon$ and $R \circ u_{\varepsilon} = id_{FBL_*\langle \mathbb{L} \rangle}$.
\end{prop}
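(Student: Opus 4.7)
The plan is to deduce Proposition~\ref{caractProyectividad} as a direct specialization of Proposition~\ref{quotientofprojective}. I would take $P=FBL(\mathbb{L})$ and $\mathcal{I}=\ker R$, and rely on the three facts already recorded in the preliminaries: $FBL(\mathbb{L})$ is projective by \cite[Proposition 10.2]{dPW15}; the restriction map $R\colon FBL(\mathbb{L})\to FBL_\ast\langle\mathbb{L}\rangle$ is surjective with kernel the ideal $\mathcal{I}$ generated by the expressions $\delta_x\vee\delta_y-\delta_{x\vee y}$ and $\delta_x\wedge\delta_y-\delta_{x\wedge y}$; and the induced map $FBL(\mathbb{L})/\mathcal{I}\to FBL_\ast\langle\mathbb{L}\rangle$ is an isometric Banach lattice isomorphism. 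Under these identifications $R$ plays the role of the quotient map $\pi$ in the setting of Proposition~\ref{quotientofprojective}, and the two equivalent conditions there become verbatim the statement to be proved.

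If asked to unpack the equivalence, I would handle the forward direction by applying Definition~\ref{projdef} to $T=id_{FBL_\ast\langle\mathbb{L}\rangle}$, viewed as a map into the quotient $FBL(\mathbb{L})/\ker R\cong FBL_\ast\langle\mathbb{L}\rangle$; any $(1+\varepsilon)$-norm lift is precisely a $u_\varepsilon$ of the desired kind, satisfying $R\circ u_\varepsilon=id$. Conversely, given $\varepsilon_0>0$, a closed ideal $J\subseteq X$ with quotient map $Q\colon X\to X/J$, and a Banach lattice homomorphism $T\colon FBL_\ast\langle\mathbb{L}\rangle\to X/J$, I would choose $\varepsilon$ with $(1+\varepsilon)^2\leq 1+\varepsilon_0$, use projectivity of $FBL(\mathbb{L})$ to lift $T\circ R\colon FBL(\mathbb{L})\to X/J$ to a homomorphism $S\colon FBL(\mathbb{L})\to X$ with $Q\circ S=T\circ R$ and $\|S\|\leq(1+\varepsilon)\|T\|$, and then define $\hat T=S\circ u_\varepsilon$. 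A direct check yields $Q\circ\hat T=T\circ R\circ u_\varepsilon=T$ and $\|\hat T\|\leq(1+\varepsilon)^2\|T\|\leq(1+\varepsilon_0)\|T\|$.

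There is essentially no obstacle: the substantive work has already been done in Proposition~\ref{quotientofprojective}, and the present statement is merely the instantiation of that criterion to the concrete quotient $R\colon FBL(\mathbb{L})\to FBL_\ast\langle\mathbb{L}\rangle$. The only bookkeeping issue is the quadratic blow-up of the multiplicative constant when two approximate lifts are composed, which is dealt with by the standard trick of shrinking $\varepsilon$ in advance.
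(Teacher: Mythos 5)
Your proposal is correct and matches the paper exactly: the paper also obtains this proposition as the instance of Proposition~\ref{quotientofprojective} with $P=FBL(\mathbb{L})$ and $\pi=R$, citing the projectivity of $FBL(\mathbb{L})$ from \cite{dPW15} and the fact that $R$ is a quotient map with kernel $\mathcal{I}$. Your optional unpacking of the equivalence simply reproduces the proof of Proposition~\ref{quotientofprojective}, so nothing further is needed.
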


\section{Projectivity of the free Banach lattice generated by a finite lattice}

We are going to prove that if $\mathbb{L}$ is a finite lattice, then $FBL_*\langle \mathbb{L} \rangle$ is a projective Banach lattice.

\begin{prop}\label{LEstrellaANR}

If $\mathbb{L} = \set{0, \ldots, n-1}$ is a finite lattice, then $\mathbb{L}^* \cap \partial [-1,1]^n$ is $ANR$.

\end{prop}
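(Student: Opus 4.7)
The plan is to realize $\mathbb{L}^{\ast} \cap \partial [-1,1]^n$ as a finite union of closed convex subsets of $\mathbb{R}^n$ and then invoke the preliminary fact that any such union is an ANR. The strategy is carried out in three steps: partition the cube $[-1,1]^n$ into polyhedral cells indexed by total orders of the coordinates; check that on each cell the lattice-homomorphism conditions become linear equalities; and finally cut by the faces of the cube.

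Concretely, for each total order $\prec$ on the finite set $\mathbb{L} = \set{0,\dots,n-1}$ I would consider the closed convex polytope
$$C_\prec = \set{x \in [-1,1]^n : x_a \leq x_b \text{ whenever } a \prec b}.$$
As $\prec$ runs over the $n!$ total orders on $\mathbb{L}$, the cells $C_\prec$ cover $[-1,1]^n$. On $C_\prec$ the real maximum $\max(x_a,x_b)$ is just the coordinate $x_c$ associated with the $\prec$-larger of $a$ and $b$, and analogously for the minimum. So the defining conditions of $\mathbb{L}^{\ast}$, namely $x_{a\vee b}=\max(x_a,x_b)$ and $x_{a\wedge b}=\min(x_a,x_b)$ for $a,b\in\mathbb{L}$, become on $C_\prec$ a finite list of \emph{linear} equations of the form $x_{a\vee b}=x_c$ and $x_{a\wedge b}=x_d$. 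Consequently $\mathbb{L}^{\ast}\cap C_\prec$, being the intersection of $C_\prec$ with an affine subspace, is closed and convex.

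Writing the boundary as the finite union $\partial[-1,1]^n=\bigcup_{i,\sigma} F_i^\sigma$ of its faces $F_i^\sigma=\set{x\in[-1,1]^n:x_i=\sigma}$ with $i\in\set{0,\dots,n-1}$ and $\sigma\in\set{+1,-1}$, one then obtains
$$\mathbb{L}^{\ast} \cap \partial [-1,1]^n = \bigcup_{\prec,\, i,\, \sigma} \bigl(\mathbb{L}^{\ast} \cap C_\prec \cap F_i^\sigma\bigr),$$
a finite union of closed convex subsets of $\mathbb{R}^n$. The step I expect to need the most care is the linearization on each cell: one has to verify that once $\prec$ is fixed, every lattice condition really becomes one of finitely many linear equations $x_{a\vee b}=x_c$, and that these equations are compatible with the inequalities defining $C_\prec$. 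Finiteness of $\mathbb{L}$ is essential throughout, since it keeps both the number of cells and the number of defining equations finite; for infinite $\mathbb{L}$ this entire approach breaks down.
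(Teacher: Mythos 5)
Your proof is correct and rests on the same two preliminary facts as the paper's (closed convex subsets of $\mathbb{R}^n$ are ANR, and a finite union of closed convex sets is therefore ANR); the difference is in how the convex decomposition of $\mathbb{L}^*\cap\partial[-1,1]^n$ is produced. The paper writes, for each lattice relation $i\vee j=k$ (resp.\ $i\wedge j=k$), the set $\{x\in\mathbb{R}^n:\max(x_i,x_j)=x_k\}$ as a union of two closed convex sets, observes that $\mathbb{L}^*$ is the intersection of these sets over all relations, and implicitly uses that a finite intersection of finite unions of closed convex sets is again a finite union of closed convex sets. You instead partition the cube into the $n!$ closed polytopes $C_\prec$ indexed by total orders of the coordinates, on each of which all the lattice conditions linearize simultaneously, so that each piece $\mathbb{L}^*\cap C_\prec\cap F_i^\sigma$ is convex outright. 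Your route avoids the distribution-of-intersections step at the cost of a larger (but still finite) number of cells. The one caveat you flag, compatibility of the linear equations with the inequalities defining $C_\prec$, is not actually needed: if they are incompatible the corresponding piece is empty, which is harmless in a finite union of closed convex sets. Both arguments use the finiteness of $\mathbb{L}$ in the same essential way, to keep the number of convex pieces finite.
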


\begin{proof}

Clearly, $\partial [-1,1]^n$ is a finite union of closed convex subsets of $\mathbb{R}^n$.

On the other hand, let $$A_{ijk} = \set{(x_1, \ldots, x_n) \in \mathbb{R}^n : x_i \vee x_j = x_k}$$ and $$B_{ijk} = \set{(x_1, \ldots, x_n) \in \mathbb{R}^n : x_i \wedge x_j = x_k}.$$

It is clear that 

$$A_{ijk} = \set{(x_1, \ldots, x_n) \in \mathbb{R}^n : x_i = x_k,\, x_j \leq x_i}
\cup \set{(x_1, \ldots, x_n) \in \mathbb{R}^n : x_j = x_k,\, x_i \leq x_j},$$

and

$$B_{ijk} = \set{(x_1, \ldots, x_n) \in \mathbb{R}^n : x_i = x_k,\, x_j \geq x_i}
\cup \set{(x_1, \ldots, x_n) \in \mathbb{R}^n : x_j = x_k,\, x_i \geq x_j}$$

are union of two closed convex sets.

Since $$\mathbb{L}^* = \left(\bigcap_{i \vee j = k} A_{ijk}\right) \bigcap \left(\bigcap_{i \wedge j = k} B_{ijk}\right),$$

we have that $\mathbb{L}^\ast $ is also a finite union of closed convex subsets of $\mathbb{R}^n$. We conclude that $\mathbb{L}^* \cap \partial [-1,1]^n$ is a finite union of closed convex subsets of $\mathbb{R}^n$ and thus ANR.

\end{proof}

In the context of compact metric spaces, the retractions in the definition of ANR can be taken arbitrarily close to the identity. We state this fact as a lemma in the particular case that we need:

\begin{lem}\label{closeretraction}

Let $\mathbb{L} = \set{0, \ldots, n-1}$ be a finite lattice. Then, given $\varepsilon > 0$ , there exist an open set $V_{\varepsilon} = V_{\varepsilon}(\mathbb{L}^*)$ with $\mathbb{L}^* \cap \partial [-1,1]^n \subset V_{\varepsilon} \subset \mathbb{R}^n$ and a continuous map $\varphi: V_{\varepsilon} \longrightarrow \mathbb{L}^* \cap \partial [-1,1]^n$ such that $\varphi \vert_{\mathbb{L}^* \cap \partial [-1,1]^n} = id_{\mathbb{L}^* \cap \partial [-1,1]^n}$ and $d(x^*, \varphi(x^*)) < \varepsilon$ for every $x^*  \in V_{\varepsilon}$, where $d$ is the square metric in $\mathbb{R}^n$.

\end{lem}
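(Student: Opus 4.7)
The plan is to combine Proposition~\ref{LEstrellaANR} with the definition of ANR, and then squeeze the retraction's domain by a standard continuity argument.

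First, I would note that $K := \mathbb{L}^{\ast}\cap \partial[-1,1]^n$ is compact, being a closed subset of $[-1,1]^n$ (it is the intersection of the compact set $\partial[-1,1]^n$ with $\mathbb{L}^{\ast}$, which is closed in $[-1,1]^n$ by the description of $\mathbb{L}^{\ast}$ as an intersection of the closed sets $A_{ijk}, B_{ijk}$ from the proof of Proposition~\ref{LEstrellaANR}). By Proposition~\ref{LEstrellaANR}, $K$ is ANR, so applying the definition of ANR to the inclusion $K\subset \mathbb{R}^n$ yields an open set $V$ with $K\subset V\subset \mathbb{R}^n$ together with a continuous retraction $r:V\To K$, i.e.\ $r(x^{\ast})=x^{\ast}$ for every $x^{\ast}\in K$.

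Next, I would shrink $V$ to ensure the desired proximity to the identity. The function
$$g: V\To \mathbb{R}, \qquad g(x^{\ast}) = d(x^{\ast}, r(x^{\ast}))$$
is continuous, so
$$V_\varepsilon := \set{x^{\ast}\in V : g(x^{\ast}) < \varepsilon}$$
is open in $V$, hence open in $\mathbb{R}^n$. It contains $K$ because $g$ vanishes on $K$: if $x^{\ast}\in K$ then $r(x^{\ast})=x^{\ast}$, so $g(x^{\ast})=0<\varepsilon$. Setting $\varphi := r|_{V_\varepsilon}$, we obtain a continuous map $\varphi:V_\varepsilon\To K$ with $\varphi|_K = id_K$ and $d(x^{\ast},\varphi(x^{\ast}))<\varepsilon$ for every $x^{\ast}\in V_\varepsilon$, as required.

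I do not foresee a genuine obstacle here: once Proposition~\ref{LEstrellaANR} is in hand, the lemma is essentially the observation that any ANR-retraction must be close to the identity on points close to the retract. The only slightly delicate point is conceptual rather than technical, namely that the definition of ANR supplies a retraction on \emph{some} neighborhood without control on how much it moves points, and one has to make this explicit by passing to the open sublevel set of $g$.
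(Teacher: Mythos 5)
Your proof is correct, and it reaches the conclusion by a cleaner route than the paper's at the key step. Both arguments begin identically: Proposition~\ref{LEstrellaANR} gives that $K=\mathbb{L}^*\cap\partial[-1,1]^n$ is a compact ANR, and the definition of ANR applied to the closed embedding $K\subset\mathbb{R}^n$ produces an open neighborhood $V$ and a retraction onto $K$. Where you differ is in how the neighborhood is shrunk so that the retraction moves points by less than $\varepsilon$. The paper passes to an intermediate open set $W$ with $K\subset W\subset\overline{W}\subset V$, invokes uniform continuity of the retraction on the compact set $\overline{W}$ to get a modulus $\delta$, defines $V_\varepsilon$ as the set of points of $W$ within $\eta=\min(\varepsilon/2,\delta)$ of $K$, and closes with a triangle inequality $d(x^*,\varphi(x^*))\leq d(x^*,y^*)+d(\varphi(y^*),\varphi(x^*))<\varepsilon$. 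You instead observe that the displacement function $g(x^*)=d(x^*,r(x^*))$ is continuous on $V$ and vanishes on $K$, so its open sublevel set $\set{g<\varepsilon}$ is already the required $V_\varepsilon$. Your version avoids uniform continuity and the auxiliary set $W$ altogether, and it does not even use compactness of $K$ at this stage (only that $K$ is where $g$ vanishes); the paper's version is the more standard textbook maneuver but does strictly more work for the same conclusion. Both are complete proofs.
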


\begin{proof}

As $\mathbb{L}^* \cap \partial [-1,1]^n$ is an ANR by Proposition \ref{LEstrellaANR}, we cand find a bounded neighborhood $V$ of $\mathbb{L}^* \cap \partial [-1,1]^n$ in $\mathbb{R}^n$ and a retraction $\psi : V \longrightarrow \mathbb{L}^* \cap \partial [-1,1]^n$. Let us take an open set $W$ such that $\mathbb{L}^* \cap \partial [-1,1]^n \subset W \subset \overline{W} \subset V \subset \mathbb{R}^n$. Now, $\psi \vert_{\overline{W}} : \overline{W} \longrightarrow \mathbb{L}^* \cap \partial [-1,1]^n$ is a continuous map between compact metric spaces, so it is uniformly continuous. Given $\varepsilon > 0$, there exists $\delta > 0$ such that $d(\psi(x^*), \psi (y^*)) < \varepsilon/2$ if $x^*, y^* \in \overline{W}$ and $d(x^*,y^*) < \delta$. Put $\eta = \min (\varepsilon/2, \delta)$ and take $$V_{\varepsilon} = \set{x^* \in W : \text{there exists }y^* \in \mathbb{L}^* \cap \partial [-1,1]^n \text{ with }d(x^*,y^*) < \eta},$$
and $\varphi = \psi|_{ V_{\varepsilon}}: V_{\varepsilon} \longrightarrow \mathbb{L}^* \cap \partial [-1,1]^n$. Clearly, $\varphi$ is continuous and $\varphi \vert_{\mathbb{L}^* \cap \partial [-1,1]^n} = id_{\mathbb{L}^* \cap \partial [-1,1]^n}$. Let $x^* \in V_{\varepsilon}$, and let $y^* \in \mathbb{L}^* \cap \partial [-1,1]^n$ such that $d(x^*,y^*) < \eta$. Then, $$d(x^*, \varphi(x^*)) \leq d(x^*,y^*) + d(y^*, \varphi (x^*)) = d(x^*,y^*) + d(\varphi (y^*), \varphi (x^*)) < \frac{\varepsilon}{2} + \frac{\varepsilon}{2} = \varepsilon.$$
\end{proof}

\begin{thm}

If $\mathbb{L}$ is a finite lattice, then $FBL_* \langle \mathbb{L} \rangle$ is a projective Banach lattice.

\end{thm}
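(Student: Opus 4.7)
My plan is to invoke Proposition~\ref{caractProyectividad}: for each $\varepsilon>0$ I must construct a Banach lattice homomorphism $u_\varepsilon:FBL_*\langle\mathbb{L}\rangle\to FBL(\mathbb{L})$ with $R\circ u_\varepsilon=\mathrm{id}$ and $\|u_\varepsilon\|\leq 1+\varepsilon$. Writing $n=|\mathbb{L}|$ and $S=\mathbb{L}^*\cap\partial[-1,1]^n$, recall that for a finite lattice $FBL(\mathbb{L})$ equals the space of all continuous positively homogeneous functions on $[-1,1]^n$; hence the task reduces to an almost-isometric extension scheme from $\mathbb{L}^*$ to $[-1,1]^n$.

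I would proceed as follows. Set $\eta=\varepsilon/n$ and apply Lemma~\ref{closeretraction} to obtain an open $V_\eta\supset S$ and a retraction $\varphi:V_\eta\to S$ with $d(x^*,\varphi(x^*))<\eta$. By Urysohn choose a continuous $\rho:\partial[-1,1]^n\to[0,1]$ with $\rho\equiv 1$ on $S$ and support contained in $V_\eta\cap\partial[-1,1]^n$, and define
\[\hat\varphi(x^*):=\begin{cases}\rho(x^*)\varphi(x^*) & \text{if } x^*\in V_\eta\cap\partial[-1,1]^n,\\ 0 & \text{otherwise.}\end{cases}\]
Then $\hat\varphi:\partial[-1,1]^n\to\mathbb{L}^*$ is continuous (the two cases agree on the boundary of the support of $\rho$) and lands in $\mathbb{L}^*$, which is closed under scaling by $[0,1]$. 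For $g\in FBL_*\langle\mathbb{L}\rangle$, set $u_\varepsilon(g)(x^*):=g(\hat\varphi(x^*))$ for $x^*\in\partial[-1,1]^n$ and extend to $[-1,1]^n$ by positive homogeneity. The result is continuous and positively homogeneous, hence lies in $FBL(\mathbb{L})$; the map $u_\varepsilon$ is visibly linear and a lattice homomorphism; and $R\circ u_\varepsilon=\mathrm{id}$ follows from $\hat\varphi|_S=\mathrm{id}_S$ together with positive homogeneity of $g$, since every $y^*\in\mathbb{L}^*\setminus\{0\}$ is a positive scalar multiple of an element of $S$.

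The delicate step, which I expect to be the main obstacle, is the norm estimate $\|u_\varepsilon\|\leq 1+\varepsilon$. Take a test family $(r_i,x_i^*)$ with $x_i^*\in[-1,1]^{\mathbb{L}}$ and $\sup_{x\in\mathbb{L}}\sum_i|r_ix_i^*(x)|\leq 1$; positive homogeneity lets me assume each $x_i^*\in\partial[-1,1]^n$. Setting $y_i^*:=\hat\varphi(x_i^*)\in\mathbb{L}^*$ (with $y_i^*=0$ whenever $x_i^*\notin V_\eta$),
\[\sum_i|r_iu_\varepsilon(g)(x_i^*)|=\sum_i|r_ig(y_i^*)|\leq\|g\|_*\cdot\sup_{x\in\mathbb{L}}\sum_i|r_iy_i^*(x)|.\]
Using $|\varphi(x_i^*)(x)-x_i^*(x)|<\eta$ on the support of $\rho$ and $0\leq\rho\leq 1$,
\[\sum_i|r_iy_i^*(x)|\leq\sum_i|r_ix_i^*(x)|+\eta\sum_i|r_i|\leq 1+\eta\sum_i|r_i|.\]
The key observation forcing $\eta=\varepsilon/n$ is the uniform bound $\sum_i|r_i|\leq n$: since $x_i^*\in\partial[-1,1]^n$ gives $|r_i|=\max_{x\in\mathbb{L}}|r_ix_i^*(x)|\leq\sum_{x\in\mathbb{L}}|r_ix_i^*(x)|$, interchanging sums and applying the constraint yields $\sum_i|r_i|\leq\sum_x\sum_i|r_ix_i^*(x)|\leq n$. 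Combining, $\sup_x\sum_i|r_iy_i^*(x)|\leq 1+\varepsilon$, hence $\|u_\varepsilon(g)\|\leq(1+\varepsilon)\|g\|_*$. The crux is precisely that the a~priori unbounded error term $\eta\sum_i|r_i|$ can only be tamed by the finiteness of $\mathbb{L}$, so the retraction precision $\eta$ must be chosen to depend explicitly on $n$.
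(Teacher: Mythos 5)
Your proposal is correct and follows essentially the same route as the paper: the retraction from Lemma~\ref{closeretraction}, a Urysohn cutoff, composition to define $u_\varepsilon$, and the bound $\sum_i|r_i|\leq n$ coming from $x_i^*\in\partial[-1,1]^n$ to control the error term. The only cosmetic differences are that you scale the argument $\rho(x^*)\varphi(x^*)$ rather than the value $\rho(x^*)f(\varphi(x^*))$ (equivalent by positive homogeneity) and that you choose $\eta=\varepsilon/n$ up front, where the paper settles for $\|u_\varepsilon\|\leq 1+n\varepsilon$ and notes this suffices.
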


\begin{proof}

Let $n$ be the cardinality of $\mathbb{L}$. We may suppose that $\mathbb{L} = \{0,\ldots,n-1\}$ with some lattice operations, and in this way we identity $[-1,1]^\mathbb{L}$ with $[-1,1]^n$. We fix $\varepsilon > 0$, and we will construct the  map $u_{\varepsilon}:  FBL_*\langle \mathbb{L} \rangle \longrightarrow FBL(\mathbb{L})$ of Lemma~\ref{caractProyectividad}. Let $V_\varepsilon$ and $\varphi$ be given by Lemma~\ref{closeretraction}. By Urysohn's lemma, we can find a continuous function  $1_{\varepsilon}: \partial[-1,1]^n \longrightarrow [0,1]$ such that $1_{\varepsilon}(x^*) = 1$ if $x^* \in \mathbb{L}^* \cap \partial[-1,1]^n$, and $1_{\varepsilon}(x^*) = 0$ if $x^* \not\in V_{\varepsilon}$.  We define $u_{\varepsilon}(f)(x^*) = {1_{\varepsilon}}(x^*)\cdot f(\varphi(x^*))$ if $x^* \in V_{\varepsilon}$, and $u_{\varepsilon}(f)(x^*) = 0$ if $x^* \notin V_{\varepsilon}$, for every $f \in  FBL_*\langle \mathbb{L} \rangle$ and $x^* \in \partial[-1,1]^n$. We extend the definition for elements $x^\ast\in [-1,1]^n \setminus \partial [-1,1]^n$ in such a way that $u_\varepsilon(f)$ is positively homogeneous. Since $\mathbb{L}$ is finite, the fact that $u_\varepsilon(f)$ is continuous on $\partial [-1,1]^n$ and positively homogeneous guarantees that $u_\varepsilon(f)\in FBL(\mathbb{L})$. It is easy to check that $u_{\varepsilon}$ is a Banach lattice homomorphism and that $R \circ u_{\varepsilon} = id_{FBL_*\langle \mathbb{L} \rangle}$. It would remain to check that $\| u_{\varepsilon} \| \leq 1+\varepsilon$. We will prove instead that for this $u_\varepsilon$ we have $\|u_\varepsilon\|\leq 1 + n\varepsilon$, which is still good enough. We know that $$\| u_{\varepsilon} \| = \sup \set{\| u_{\varepsilon}(f) \| : f \in FBL_* \langle \mathbb{L} \rangle, \| f \|_* \leq 1},$$ where $$ \| u_{\varepsilon}(f) \| = \sup \set{\sum_{i = 1}^m \abs{r_i u_{\varepsilon}(f)(x_{i}^{\ast})} : x_i^{\ast} \in \partial[-1,1]^n, \text{ } r_i \in \mathbb{R},\text{ }\sup_{x \in \mathbb{L}} \sum_{i=1}^m \abs{r_i x_i^{\ast}(x)} \leq 1 }.$$

So we fix $f \in FBL_*\langle \mathbb{L} \rangle$ with $\norm{f}_* \leq 1$, where $$ \norm{f}_\ast = \sup \set{\sum_{i = 1}^m \abs{s_i f(y_{i}^{\ast})} : y_i^{\ast} \in \mathbb{L}^*, \text{ } s_i \in \mathbb{R},\text{ }\sup_{x \in \mathbb{L}} \sum_{i=1}^m \abs{s_i y_i^{\ast}(x)} \leq 1 },$$
and we want to prove that $\|u_\varepsilon(f)\|\leq 1+n\varepsilon$.
Using the expression of $\|u_\varepsilon(f)\|$ as a supremum, we pick $x_1^*, \ldots, x_m^* \in \partial[-1,1]^n$, $r_1, \ldots, r_m \in \mathbb{R}$ such that $\sup_{x \in \mathbb{L}} \sum_{i=1}^m \abs{r_i x_i^{\ast}(x)} \leq 1$, and we want to prove that 
$$\sum_{i = 1}^m \abs{r_i u_{\varepsilon}(f)(x_{i}^{\ast})}\leq 1 + n\varepsilon.$$

The first estimation is that $$\sum_{i = 1}^m \abs{r_i u_{\varepsilon}(f)(x_{i}^{\ast})} = \sum_{x_i^* \in V_{\varepsilon}} \abs{r_i  {1_{\varepsilon}}(x_i^*)f(\varphi(x_i^*))} \leq \sum_{x_i^* \in V_{\varepsilon}} \abs{r_i f(\varphi(x_i^*))}.$$ If we write ${y_i}^\ast := \varphi(x_i^*)$, the inequality above becomes
 $$(\star)\ \ \sum_{i = 1}^m \abs{r_i u_{\varepsilon}(f)(x_{i}^{\ast})} \leq \sum_{x_i^* \in V_{\varepsilon}} \abs{r_i f({y_i}^\ast)}.$$

On the other hand, if $x \in \mathbb{L}$ then

\begin{equation*} 
\begin{split} 
\sum_{x_i^* \in V_{\varepsilon}}\abs{r_i {y_i^*}(x)} &= \sum_{x_i^* \in V_{\varepsilon}}\abs{r_i \varphi(x_i^*)(x)}  \\ 
&\leq  \sum_{x_i^* \in V_{\varepsilon}}\abs{r_i x_i^*(x)} + \sum_{x_i^* \in V_{\varepsilon} }\abs{r_i} \abs{\varphi(x_i^*)(x) - x_i^*(x)}\\ 
&\leq 1 + \varepsilon  \sum_{x_i^* \in V_{\varepsilon} }\abs{r_i} \leq 1 + \varepsilon n.\\ 
\end{split} 
\end{equation*} 

The last inequality is because $x_i ^\ast\in \partial [-1,1]^n$, and therefore
\begin{equation*}
\sum_{i=1}^m|r_i| = \sum_{i=1}^m|r_i|\sup_{x\in\mathbb{L}}|x_i^\ast(x)| \leq \sum_{x\in\mathbb{L}}\sum_{i=1}^m |r_i| |x_i^\ast(x)| \leq |\mathbb{L}|\cdot 1 = n.
\end{equation*}

Taking $s_i = \frac{r_i}{1 + n\varepsilon}$, we have that, for all $x\in\mathbb{L}$, $$\sum_{x_i^* \in V_{\varepsilon}}\abs{s_i {y_i^*}(x)} = \sum_{x_i^* \in V_{\varepsilon}}\abs{\frac{r_i}{1 + n\varepsilon} {y_i^*}(x)} \leq 1.$$

Thus, the $s_i$ and the $y_i$ are as in the supremum that defines $\|f\|_\ast \leq 1$. Therefore $$\sum_{x_i^* \in V_{\varepsilon}}\abs{s_i f({y_i^*})} \leq 1,$$ and getting back to our initial estimation ($\star$), we get   $$\sum_{i = 1}^m \abs{r_i u_{\varepsilon}(f)(x_{i}^{\ast})} \leq \sum_{x_i^* \in V_{\varepsilon}}\abs{r_i f({y_i^*})} \leq 1 + n\varepsilon.$$ 

\end{proof}

\section{Projectivity of the free Banach lattice generated by an infinite linear order}

Now, we are going to prove that if $\mathbb{L}$ is an infinite linear order \textcolor{blue}{containing either an increasing sequence without upper bounds or a decreasing sequence without lower bounds}, then $FBL_*\langle \mathbb{L} \rangle$ is not projective. This will be a direct consequence of the fact that the free Banach lattice generated by the set of the natural numbers is not projective. In the proof, we will use the following:

\begin{lem}\label{funcionesenFBLN}

 Suppose that $\varphi_i: [-1,1]^{\mathbb{N}} \longrightarrow \mathbb{R}$, $i = 1,2, \ldots$, are continuous functions such that, for every $i$,
\begin{enumerate}
\item $\varphi_i((x_n)_{n \in \mathbb{N}}) = x_i$ whenever $x_1 \leq x_2 \leq \ldots$,
\item $\varphi_i(x) \leq \varphi_{i+1}(x)$ for all $x \in [-1,1]^{\mathbb{N}}$.
\end{enumerate}
Then, when we view the $\varphi_i$'s as elements of the free Banach lattice $FBL(\mathbb{N})$, the sequence of norms $\| \varphi_{i} \|_{FBL(\mathbb{N})}$ is unbounded. 

\end{lem}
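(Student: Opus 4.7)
The plan is to show that $\|\varphi_i\|_{FBL(\mathbb{N})}$ is unbounded by constructing, for arbitrarily large $N$, explicit dual witnesses in the definition of the $FBL(\mathbb{N})$-norm that force $\|\varphi_{j_N}\|_{FBL(\mathbb{N})} > N/2$ for a suitable index $j_N$. For each $j \in \mathbb{N}$, I would first introduce the increasing step sequence $q_j \in [-1,1]^{\mathbb{N}}$ defined by $q_j(n) = 1$ for $n \geq j$ and $q_j(n) = 0$ otherwise. Hypothesis (1) gives $\varphi_j(q_j) = q_j(j) = 1$. Since $\varphi_j$ is continuous on the compact product space $[-1,1]^{\mathbb{N}}$ and basic neighborhoods of $q_j$ in the product topology are determined by finitely many coordinates, there exists a finite integer $M_j \geq j$ such that every $z \in [-1,1]^{\mathbb{N}}$ with $z(n) = q_j(n)$ for all $n \in \{1, \ldots, M_j\}$ satisfies $\varphi_j(z) > 1/2$.

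Next, I would pick inductively a sparse subsequence $j_1 < j_2 < \ldots$ with $j_{k+1} > M_{j_k}$ for every $k$ (possible because each $M_{j_k}$ is finite and $\mathbb{N}$ is unbounded), and define the truncated step sequence $z_k \in [-1,1]^{\mathbb{N}}$ by $z_k(n) = 1$ if $j_k \leq n \leq M_{j_k}$ and $z_k(n) = 0$ otherwise. By construction each $z_k$ agrees with $q_{j_k}$ on $\{1, \ldots, M_{j_k}\}$, so $\varphi_{j_k}(z_k) > 1/2$; moreover, the supports of the $z_k$'s are the pairwise disjoint intervals $[j_k, M_{j_k}] \subseteq \mathbb{N}$.

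Finally, I fix $N$ and plug the test data $r_k = 1$, $x_k^{\ast} = z_k$ for $k = 1, \ldots, N$ into the $FBL(\mathbb{N})$-norm formula. The disjointness of the supports reduces the constraint to $\sup_n \sum_{k=1}^N |r_k z_k(n)| = 1$, and iterating hypothesis (2) along $\varphi_{j_k} \leq \varphi_{j_k+1} \leq \cdots \leq \varphi_{j_N}$ gives $\varphi_{j_N}(z_k) \geq \varphi_{j_k}(z_k) > 1/2$ for every $k \leq N$. Summing the $N$ positive contributions yields $\|\varphi_{j_N}\|_{FBL(\mathbb{N})} \geq \sum_{k=1}^N \varphi_{j_N}(z_k) > N/2$, which is unbounded as $N \to \infty$.

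The main obstacle is that the continuity moduli of the individual $\varphi_j$'s are not controlled in any uniform way, so the $M_j$'s may grow arbitrarily fast; the key observation is that this is harmless, because we are free to pick the subsequence $(j_k)$ as sparsely as we like in order to achieve disjoint supports, after which hypothesis (2) transfers each pointwise lower bound $\varphi_{j_k}(z_k) > 1/2$ up to $\varphi_{j_N}(z_k) > 1/2$ at no cost.
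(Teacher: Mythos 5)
Your proof is correct and follows essentially the same strategy as the paper's: both perturb increasing step sequences into disjointly supported $\{0,1\}$-valued test vectors (indicator functions of consecutive intervals) using continuity, then use hypothesis (2) to push the pointwise lower bound $>1/2$ up to a common index, yielding $\|\varphi_{j_N}\|_{FBL(\mathbb{N})} > N/2$. The only (harmless) difference is that you invoke continuity of each $\varphi_j$ at the single point $q_j$, whereas the paper uses compactness of the whole set $M$ of increasing sequences to obtain a uniform neighborhood $U_j$ of $M$; both give the finitely-determined perturbation you need.
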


\begin{proof}

Let $\pi_i : [-1,1]^{\mathbb{N}} \longrightarrow [-1,1]$ be the projection on the $i$-th coordinate. Consider the set $M := \set{(x_n)_{n \in \mathbb{N}} \in [-1,1]^{\mathbb{N}} : x_1 \leq x_2 \leq \ldots} \subset [-1,1]^{\mathbb{N}}$. Since $M$ is closed and $[-1,1]^{\mathbb{N}}$ with the product topology is compact, we have that $M$ is compact. Condition {(1)} in the lemma means that $\varphi_i\vert_M = \pi_i\vert_M$ for all $i$.
Using the compactness of $M$ and the continuity of $\varphi_i$ and $\pi_i$, this implies that there exists a neighborhood $U_{i}$ of $M$ such that $$d(\varphi_i\vert_{U_{i}}, \pi_i\vert_{U_{i}}) = \sup_{x \in U_{i}}| \varphi_i(x) - \pi_i(x) | < \frac{1}{2}.$$
For an integer $k \geq 3$, let $$W_{k} := \set{(x_n)_{n \in \mathbb{N}} \in [-1,1]^{\mathbb{N}} : x_i < x_j + k^{-1} \text{ whenever }i<j<k}.$$
The family $\set{W_{k} : k \geq 3}$ is a neighborhood basis of $M$. We define inductively an increasing sequence of natural numbers $k_0<k_1<k_2<k_3<\cdots$, and a sequence of points $y^1,y^2,\ldots\in [-1,1]^\mathbb{N}$ as follows. 
We take $k_0=1$ as a starting point of the induction. Suppose that we have defined $k_j$ for $j < n$. We choose $k_{j+1}>k_j$ such that $W_{k_{j+1}}\subset U_{k_j}$, and we define $y^{j+1} : \mathbb{N} \longrightarrow [-1,1]$ to be the map given by 
$$y^{j+1}(n)= \left\{ \begin{array}{lcc}
              0 &   if  & n < k_j, \\
              \\ 1 &  if  & k_j \leq n < k_{j+1}, \\
              \\ 0 & if & n \geq k_{j+1}.
              \end{array}
    \right.$$
We have $y^{j+1} \in W_{k_{j+1}}$, so  $| \varphi_{k_j}(y^{j+1}) - \pi_{k_j}(y^{j+1}) | = |\varphi_{k_j}(y^{j+1}) - 1 | < \frac{1}{2}$ and $\varphi_{k_j}(y^{j+1}) > \frac{1}{2}$.

When $j+1\leq m$, using condition (1) of the Lemma, we get that
$$\varphi_{k_m}(y^{j+1}) \geq \varphi_{k_{j}}(y^{j+1}) > \frac{1}{2}.$$

Remember how the norm is defined: $$\| \varphi \|_{FBL(\mathbb{N})} = \sup \set{ \sum_{j = 1}^m  |r_j \varphi (x_j)| : r_j \in \mathbb{R},\, x_j \in [-1,1]^{\mathbb{N}},\, \sup_{n \in \mathbb{N}} \sum_{j=1}^m |r_j x_j(n)| \leq 1 }.$$

We have that $\sup_{n \in \mathbb{N}} | y^1(n) + \cdots + y^m(n) | = 1$, therefore $$\| \varphi_{k_m} \|_{FBL(\mathbb{N})} \geq |\varphi_{k_m}(y^1)| + \cdots + |\varphi_{k_m}(y^m)| > \frac{m}{2}.$$
\end{proof}

\textcolor{red}{\textst{Now, let $\mathbb{N}^+ = \mathbb{N} \cup \lbrace +\infty \rbrace$.}}

\begin{lem}\label{Nnotproj} $FBL_* \langle \mathbb{N} \rangle$ \textcolor{red}{\textst{and $FBL_* \langle \mathbb{N}^+ \rangle$ are}} \textcolor{blue}{is} not projective.
\end{lem}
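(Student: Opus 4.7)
The plan is to combine Proposition \ref{caractProyectividad} with Lemma \ref{funcionesenFBLN} in the obvious way: if a bounded lifting of the identity existed, then its values on the generators $\dot{\delta}_i$ would produce a sequence of continuous functions in $FBL(\mathbb{N})$ that meets the hypotheses of Lemma \ref{funcionesenFBLN}, hence with unbounded $FBL(\mathbb{N})$-norms, contradicting the bounded norm of the lifting.

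More concretely, I would argue by contradiction: assume $FBL_*\langle \mathbb{N}\rangle$ is projective. By Proposition \ref{caractProyectividad}, for some $\varepsilon > 0$ (any will do) there is a Banach lattice homomorphism $u_\varepsilon : FBL_*\langle \mathbb{N}\rangle \to FBL(\mathbb{N})$ with $R\circ u_\varepsilon = id$ and $\|u_\varepsilon\|\leq 1+\varepsilon$. Set $\varphi_i := u_\varepsilon(\dot{\delta}_i)\in FBL(\mathbb{N})$. Since each element of $FBL(\mathbb{N})$ is continuous on $[-1,1]^{\mathbb{N}}$, so is $\varphi_i$.

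Next I would verify the two hypotheses of Lemma \ref{funcionesenFBLN}. For (1): if $(x_n)\in [-1,1]^{\mathbb{N}}$ satisfies $x_1\leq x_2\leq\cdots$, then the map $n\mapsto x_n$ is order-preserving, hence a lattice homomorphism from $\mathbb{N}$ into $[-1,1]$, i.e. an element $x^\ast\in\mathbb{N}^\ast$. Because $R\circ u_\varepsilon = id$, we have $\varphi_i|_{\mathbb{N}^\ast} = R(\varphi_i) = \dot{\delta}_i$, and so $\varphi_i((x_n)) = x^\ast(i) = x_i$. For (2): for every $x^\ast\in\mathbb{N}^\ast$ we have $\dot{\delta}_i(x^\ast) = x^\ast(i)\leq x^\ast(i+1) = \dot{\delta}_{i+1}(x^\ast)$, so $\dot{\delta}_i\leq \dot{\delta}_{i+1}$ in $FBL_\ast\langle\mathbb{N}\rangle$. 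Applying the Banach lattice homomorphism $u_\varepsilon$ gives $\varphi_i\leq \varphi_{i+1}$ pointwise on $[-1,1]^{\mathbb{N}}$.

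Finally, since $\|\dot{\delta}_i\|_\ast\leq 1$ (take the trivial singleton family $r_1=1$, $y_1^\ast$ realizing the supremum), we get $\|\varphi_i\|_{FBL(\mathbb{N})}\leq \|u_\varepsilon\|\cdot\|\dot{\delta}_i\|_\ast\leq 1+\varepsilon$ for every $i$. This contradicts the conclusion of Lemma \ref{funcionesenFBLN}, which gives $\|\varphi_i\|_{FBL(\mathbb{N})}\to\infty$. I do not expect any real obstacle here: the only step that needs a moment of thought is identifying $\mathbb{N}^\ast$ with the increasing sequences in $[-1,1]^{\mathbb{N}}$, which is immediate because lattice homomorphisms out of a linearly ordered set are exactly the order-preserving maps.
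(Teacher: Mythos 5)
Your proposal is correct and follows exactly the same route as the paper: apply Proposition \ref{caractProyectividad} to get $u_\varepsilon$, set $\varphi_i = u_\varepsilon(\dot{\delta}_i)$, check the two hypotheses of Lemma \ref{funcionesenFBLN} from $R\circ u_\varepsilon = id$ and the lattice homomorphism property, and derive a contradiction with the norm bound. The only nitpick is your parenthetical justification of $\|\dot{\delta}_i\|_\ast\leq 1$: one must bound \emph{every} admissible family $\sum_j |r_j\,y_j^\ast(i)|\leq \sup_{x}\sum_j|r_j y_j^\ast(x)|\leq 1$, not merely exhibit one, but this is immediate and does not affect the argument.
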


\begin{proof}

\textcolor{red}{\textst{First, if}} \textcolor{blue}{If} $FBL_* \langle \mathbb{N} \rangle$ was projective, then for $\varepsilon>0$ we would have a map  $u_{\varepsilon}:  FBL_*\langle \mathbb{N} \rangle \longrightarrow FBL(\mathbb{N})$ like in Proposition \ref{caractProyectividad}. Remember that if $i \in \mathbb{N}$, $\dot{\delta_i}: \mathbb{N}^* \longrightarrow \mathbb{R}$ is the map given by $\dot{\delta_i}(x^*) = x^*(i)$ for every $x^* \in \mathbb{N}^*$, that is an element of $FBL_\ast\langle\mathbb{N}\rangle$. We consider $\varphi_i = u_\varepsilon(\dot{\delta}_i)\in FBL(\mathbb{N})$, that we view as continuous functions $\varphi_i:[-1,1]^\mathbb{N}\To \mathbb{R}$. The fact that $u_\varepsilon$ is a lattice homomorphism gives condition (2) of Lemma \ref{funcionesenFBLN}, while the fact that $R\circ u_\varepsilon = id_{FBL_\ast\langle\mathbb{N}\rangle}$ gives condition (1) of Lemma \ref{funcionesenFBLN}. The fact that $\|u_\varepsilon\|\leq 1+\varepsilon$ contradicts the conclusion of Lemma \ref{funcionesenFBLN}.

\textcolor{red}{Here there was a proof that $FBL_* \langle \mathbb{N}^+ \rangle$ was not projective, but this was wrong.}

\end{proof}

The following fact can be viewed as a corollary of Proposition~\ref{quotientofprojective}, but we state if for convenience:

\begin{lem}\label{projcomplemented}
Let $P$ and $P'$ be Banach lattices, and let $\tilde{\pi}:P\To P'$ and $\tilde{\imath}:P'\To P$ be Banach lattice homomorphisms such that $\|\tilde{\imath}\| = \|\tilde{\pi}\|=1$ and $\tilde{\pi}\circ\tilde{\imath} = id_{P'}$. If $P$ is projective, then $P'$ is projective.
\end{lem}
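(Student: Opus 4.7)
The plan is to verify Definition \ref{projdef} directly for $P'$ by lifting homomorphisms through the given pair $(\tilde{\imath},\tilde{\pi})$ and using the projectivity of $P$. Fix a Banach lattice $X$, a closed ideal $J\subset X$ with quotient map $Q:X\To X/J$, a Banach lattice homomorphism $T:P'\To X/J$, and $\varepsilon>0$; I must produce $\hat{T}:P'\To X$ with $Q\circ\hat{T}=T$ and $\|\hat{T}\|\leq(1+\varepsilon)\|T\|$.

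First, I would form the composition $T\circ\tilde{\pi}:P\To X/J$, which is a Banach lattice homomorphism of norm at most $\|T\|\cdot\|\tilde{\pi}\|=\|T\|$. Since $P$ is projective, Definition \ref{projdef} supplies a Banach lattice homomorphism $S:P\To X$ with $Q\circ S=T\circ\tilde{\pi}$ and $\|S\|\leq(1+\varepsilon)\|T\circ\tilde{\pi}\|\leq(1+\varepsilon)\|T\|$.

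Now I set $\hat{T}:=S\circ\tilde{\imath}:P'\To X$. This is a composition of Banach lattice homomorphisms, hence a Banach lattice homomorphism. Using $\tilde{\pi}\circ\tilde{\imath}=id_{P'}$, I compute
\[
Q\circ\hat{T}=Q\circ S\circ\tilde{\imath}=T\circ\tilde{\pi}\circ\tilde{\imath}=T,
\]
and $\|\hat{T}\|\leq\|S\|\cdot\|\tilde{\imath}\|\leq(1+\varepsilon)\|T\|$, as required. There is no real obstacle here, since the hypothesis that both $\|\tilde{\imath}\|$ and $\|\tilde{\pi}\|$ equal $1$ is precisely what preserves the delicate $(1+\varepsilon)$-bound through the two compositions; any larger norms would inflate the constant and spoil projectivity. (As the statement notes, one could alternatively deduce this from Proposition \ref{quotientofprojective} by identifying $P'$ with the quotient of $P$ by $\ker\tilde{\pi}$ and using $\tilde{\imath}$ as the required norm-one section.)
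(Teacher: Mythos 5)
Your proof is correct and is essentially identical to the paper's own argument: compose with $\tilde{\pi}$, lift using the projectivity of $P$, and compose the lift with $\tilde{\imath}$, with the norm-one hypotheses preserving the $(1+\varepsilon)$ bound. Only the notation differs.
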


\begin{proof}
In order to check the projectivity of $P'$, let $Q:X\To X/J$, $T':P'\To X/J$ and $\varepsilon > 0$ be as in Definition~\ref{projdef}. Then we can apply the projectivity of $P$ considering $T=T'\circ\tilde{\pi}$, so we get $\hat{T}:P\To X$ such that $Q\circ \hat{T} = T'\circ\tilde{\pi}$ and $\|\hat{T}\|\leq (1 + \varepsilon) \|T\|\leq (1 + \varepsilon) \|T'\|$. The desired lift is $\hat{T}' = \hat{T} \circ \tilde{\imath}$. On the one hand $\|\hat{T}'\| \leq \|\hat{T}\| \leq (1 + \varepsilon) \|T'\|$, and on the other hand $Q\circ \hat{T} \circ \tilde{\imath} = T'\circ \tilde{\pi} \circ \tilde{\imath} = T'$.
\end{proof}

\begin{thm}
Let $\mathbb{L}$ be an infinite linearly ordered set \textcolor{blue}{containing either an increasing sequence without upper bounds or a decreasing sequence without lower bounds}. Then, $FBL_* \langle \mathbb{L} \rangle$ is not projective.
\end{thm}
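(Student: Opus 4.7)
The plan is to reduce to Lemma \ref{Nnotproj} by realizing $FBL_*\langle\mathbb{N}\rangle$ as a $1$-complemented Banach sublattice of $FBL_*\langle\mathbb{L}\rangle$ in the sense of Lemma \ref{projcomplemented}, so that non-projectivity transfers from $\mathbb{N}$ to $\mathbb{L}$.

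I would first treat the case when $\mathbb{L}$ contains an increasing sequence $a_1 < a_2 < \cdots$ without upper bounds. The key step is to build two lattice homomorphisms $\iota: \mathbb{N} \to \mathbb{L}$ and $\sigma: \mathbb{L} \to \mathbb{N}$ with $\sigma \circ \iota = \mathrm{id}_\mathbb{N}$. Set $\iota(n) = a_n$, which is strictly increasing and hence a lattice homomorphism between linear orders. For $\sigma$, set $\sigma(x) = \max\{n : a_n \le x\}$ when $x \ge a_1$ and $\sigma(x) = 1$ otherwise. The well-definedness of $\sigma$ is exactly where the hypothesis is used: the absence of an upper bound forces $\{n : a_n \le x\}$ to be finite for each $x$, and $\sigma$ is automatically a lattice homomorphism since it is monotone between linear orders. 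Both maps then lift, via the universal property of Definition \ref{FBLGBL}, to Banach lattice homomorphisms $\tilde\imath: FBL_*\langle\mathbb{N}\rangle \to FBL_*\langle\mathbb{L}\rangle$ and $\tilde\pi: FBL_*\langle\mathbb{L}\rangle \to FBL_*\langle\mathbb{N}\rangle$ of norm at most $1$ (since every generator $\dot\delta_y$ has $\|\cdot\|_*$-norm equal to $1$, as is seen by testing against the constant functional $y^* \equiv 1$). The composition $\tilde\pi \circ \tilde\imath$ agrees with $\mathrm{id}$ on the generators $\dot\delta_n$, hence equals the identity, so Lemma \ref{projcomplemented} finishes this case.

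For the decreasing case, I would reduce to the previous one by passing to the opposite lattice $\mathbb{L}^{\mathrm{op}}$: the assignment $x \mapsto -\dot\delta_x$ swaps $\vee$ and $\wedge$ and therefore defines a lattice homomorphism $\mathbb{L}^{\mathrm{op}} \to FBL_*\langle\mathbb{L}\rangle$; applying the universal property symmetrically in both directions yields an isometric Banach lattice isomorphism $FBL_*\langle\mathbb{L}\rangle \cong FBL_*\langle\mathbb{L}^{\mathrm{op}}\rangle$. Since $\mathbb{L}^{\mathrm{op}}$ now contains the increasing sequence $b_1 < b_2 < \cdots$ without upper bound, the first case concludes the argument. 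The only genuinely substantive step in the whole proof is constructing the retraction $\sigma$: without an unbounded monotone sequence there is no evident way to collapse $\mathbb{L}$ lattice-homomorphically onto $\mathbb{N}$, which is precisely why the weakened hypothesis is needed and why the original stronger statement (covering arbitrary infinite linear orders) could not be recovered in this way.
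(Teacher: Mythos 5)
Your proposal is correct and follows essentially the same route as the paper: the retraction $\sigma(x)=\max\{n: a_n\le x\}$ is exactly the paper's map $\pi$, and the reduction to Lemma~\ref{Nnotproj} via Lemma~\ref{projcomplemented} and the universal property of Definition~\ref{FBLGBL} is identical. The only difference is that you make explicit, via the opposite lattice and $x\mapsto-\dot{\delta}_x$, the symmetry that the paper dispatches with ``without loss of generality,'' which is a welcome but minor elaboration.
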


\begin{proof}

\textcolor{red}{\sout{$\mathbb{L}$ contains either an increasing or a decreasing sequence.}} Let us suppose without loss of generality that it contains an increasing  sequence $x_1<x_2<x_3<\cdots$ \textcolor{blue}{without upper bounds}.

\textcolor{red}{\sout{First, suppose that it has no upper bound.}} The map $\imath : (\mathbb{N}, \leq) \longrightarrow (\mathbb{L}, \preceq)$ given by $\imath(n) = x_n$ for every $n \in \mathbb{N}$ is a lattice homomorphism. Let $\pi : \mathbb{L}  \longrightarrow \mathbb{N}$ be the map given by 

$$\pi(x)= \left\{ \begin{array}{lcc}
              1 & \text{if} & x < x_2, \\
              n &  \text{if}  & x \in [x_n, x_{n+1}) \text{ for any } n \geq 2 .
              \end{array}
    \right.$$
    
Notice that $\pi$ is also a lattice homomorphism and $\pi\circ \imath = id_{\mathbb{N}}$. We are going to use the universal property of the free Banach lattice over a lattice as stated in Definition~\ref{FBLGBL}. Let $\phi_{\mathbb{L}}$ and $\phi_{\mathbb{N}}$ be the canonical inclusion of $\mathbb{L}$ and $\mathbb{N}$ into $FBL_*\langle \mathbb{L} \rangle$ and $FBL_*\langle \mathbb{N} \rangle$, respectively, and let $\tilde{\imath} : FBL_* \langle \mathbb{N} \rangle \longrightarrow FBL_* \langle \mathbb{L} \rangle$ and $\tilde{\pi} : FBL_*\langle \mathbb{L} \rangle \longrightarrow FBL_* \langle \mathbb{N} \rangle$ be the corresponding extensions of $\phi_{\mathbb{L}} \circ \imath$ and $\phi_{\mathbb{N}} \circ \pi$ according to Definition~\ref{FBLGBL}. The composition $\tilde{\pi}\circ\tilde{\imath}$ and the identity mapping $FBL_*\langle \mathbb{N} \rangle \longrightarrow FBL_* \langle \mathbb{N} \rangle$ are both extensions of $\phi_{\mathbb{N}}$ so by the uniqueness in Definition~\ref{FBLGBL}, $\tilde{\pi}\circ\tilde{\imath} = id_{FBL_*\langle\mathbb{N}\rangle}$. We can apply Lemma~\ref{projcomplemented}, so if $ FBL_*\langle \mathbb{L} \rangle$ was projective, then $FBL_*\langle \mathbb{N} \rangle$ would also be projective, in contradiction with Lemma~\ref{Nnotproj}.

\textcolor{red}{The rest of the proof must be omitted}

%
\end{proof}

\end{document}